\newtheorem{thm}{Theorem}
\newtheorem{lem}[thm]{Lemma}
\begin{document}

\title{Exploration of Network Scaling: Variations on Optimal Channel Networks}
\author{Lily Briggs \and Mukkai Krishnamoorthy}
\maketitle

\begingroup
\let\clearpage\relax

\begin{abstract}
Metabolic allometry, a common pattern in nature, is a close-to-3/4-power scaling law between metabolic rate and body mass in organisms, across and within species.  An analogous relationship between metabolic rate and water volume in river networks has also been observed.
Optimal Channel Networks (OCNs), at local optima, accurately model many scaling properties of river systems, including metabolic allometry.
OCNs are embedded in two-dimensional space; this work extends the model to three dimensions.
In this paper we compare characteristics of 3d OCNs with 2d OCNs and with organic metabolic networks, studying the scaling behaviors of area, length, volume, and energy.
In addition, we take a preliminary look at comparing Steiner trees with OCNs.
We find that the three-dimensional OCN has predictable characteristics analogous to those of the two-dimensional version, as well as scaling properties similar to metabolic networks in biological organisms.
\end{abstract}

\section{Introduction}

Fractal Networks have been widely used to study well known network behaviors.  Some of the properties commonly used in the study of fractal networks are scaling factors (or laws); scaling laws can describe how lengths
are related to area and areas are related to volumes. In this paper, we further study these allometric scaling laws.  We use simulation and elementary mathematical bounding techniques to study such laws.

A common pattern in nature, known as metabolic allometry, is a 3/4-power scaling law between metabolic rate ($M$) and body mass ($B$) in organisms, across and within species: $B \propto M^{3/4}$.  An analogous relationship between water mass ($C$) and contributing area ($A$) in river networks has also been observed: $A \propto C^{2/3}$.

The relationship in biological organisms was first observed by Max Kleiber in 1932 and has garnered a lot of attention in the last decade and a half from biologists, mathematicians, and physicists alike \cite{Kleiber1932Body}.  The areas this relationship finds application in include models of climate change effects in rivers \cite{Renner2012Evaluation} and oceans \cite{ocean_balance}, analyses of river stability \cite{Molnar1998Analysis}, and in studies of information networks \cite{Moses2008Scaling}.  Many theories concerning the origins and nature of this phenomenon have been proposed, and there has been much debate about how ubiquitous the scaling law really is.

In 1997, West, Brown and Enquist (WBE hereafter) introduced a theory purporting to explain metabolic scaling \cite{wbe}. Their theory is based on the idea of a fractal space-filling hierarchical network of resource distribution, with certain assumptions based on simplified biology.  The geometry of the network (e.g. branching ratios, vessel radius relationships) are important to the development of their result. The authors claimed the model is applicable even to organisms without a physical blood vessel network since they can be treated as having a virtual distribution network \cite{West1999Fourth}.

Subsequent studies questioned the WBE results, by showing that data does not appear to strictly follow a 3/4-power scaling law after all; the theoretical approach taken by WBE has also been criticized \cite{Agutter2011Analytic, White2011Manipulative}.  Some authors claim that an exponent of 2/3 fits the data better \cite{Dodds2001Reexamination}, while others assert that no universal scaling law exists at all \cite{Kolokotrones2010Curvature}.
Savage et al. pointed out that the WBE model predicts a curvilinear relationship between $log B$ and $log M$, and only predicts a constant power-law relationship in the infinite limit of body size; however, even with finite size adjustments and variations in the structural properties of the model, they found the results were still inconsistent with data \cite{Savage2008Sizing}.
Some other modifications to the WBE model do yield results somewhat more consistent with data, such as taking into account body temperature \cite{Brown2004Toward} or fluid velocity \cite{Banavar2010General}.
Kolokotrones et al. use regression on large set of organism data to show a quadratic, rather than linear, relationship between $log B$ and $log M$ \cite{Kolokotrones2010Curvature}. Also, their results reported a lot of variation.  They build off the modifications to WBE in \cite{Savage2008Sizing} and find that a model network with a proportional transition between area-increasing and area-preserving branching yields a fit to the data almost as good as their empirically-derived quadratic model.

Other theories and models have been put forth, some based on network properties like the WBE model, and others on other biological properties.  Dodds \cite{Dodds2010Optimal} proposed a model based on virtual networks of metabolite transportation rather than physical networks.

Banavar et al. proposed a general model for any efficient transportation network, predicting $B \propto M^{D/(D+1)}$ (at most) where $D$ is the dimension of the network (i.e. $D = 3$ for organism metabolism, and $D = 2$ for river networks) \cite{size_and_form}.  Thus, they predict that if the network were as efficient as possible, organism metabolism should scale as $B \propto M^{3/4}$, and deviations of the exponent below $3/4$ might be explained by some inefficiency in the metabolic distribution network.  Later the authors added to this model a supply and demand mechanism that accounts for the deviations \cite{Banavar2002Supplydemand}.

Isaac et al. propose a less strict understanding of the ``3/4-power law'': though the 3/4 exponent might not be a consistent, universal constant, it is still an observable trend; if there isn't a single unifying principle explaining why it appears, there still may be multiple reasons for it worth studying \cite{Isaac2010Why}.

In the context of river networks, the analogous properties to metabolic rate and body mass are contributing area (A) and water mass (C).  It is well-observed that the properties obey a power-law relationship with an exponent of slightly more than 2/3: $A \propto C^{2/3}$.  This is consistent with the general resource transportation model of Banavar et al. \cite{size_and_form} for systems in two dimensions, and also with the virtual network model of Dodds \cite{Dodds2010Optimal}.

Rinaldo et al. devised a model of river networks called Optimal Channel Networks (OCNs) \cite{Rinaldo1992Minimum}.  This model uses a grid network to represent the area of a river basin, and constructs a spanning network on the grid that minimizes a functional representing the energy required to drain the area of land through river channels.  The energy functional is derived both from theoretical physical characteristics and observed characteristics of river basins \cite{RodriguezIturbe1997Optimal}.  Statistical properties of Optimal Channel Networks (including metabolic allometry) correspond excellently with observed properties of real river networks \cite{Maritan2002, RodriguezIturbe2011Metabolic}.

This paper explores a three-dimensional version of OCNs, using simulations to compare characteristics of 3d OCNs with 2d OCNs and with organic metabolic networks and study the scaling behavior of area, length, volume, and energy.  We do not attempt to explain 3/4-power scaling in nature, but rather explore some interesting and potentially useful connections, somewhat in the spirit of Isaac et al.'s proposed understanding of the scaling law \cite{Isaac2010Why}.

We further study an alternative model using Steiner trees for OCNs and we compare the scaling behaviors between these two models on small network sizes.

Section 2 describes definitions and methodologies used in our paper.
In Section 3, we derive lower bound results for various scaling laws.
In section 4, we describe our simulation results and show Energy, Length and Volume Scaling with respect to area
and compare it with the lower bound results described in Section 3. In Section 5, we describe an alternative model based on Steiner Trees and derive analytical results for small network sizes. Section 6 gives conclusions and suggestions for future work.

\section{Definitions and Methods}

Consider an area of land, and embed a grid network in it such that every node in the network is associated with a unit area of land.  Construct a spanning tree on the grid and direct it so that all nodes have a path in the tree to a single root (call it the outlet).  Define a partial order on the nodes such that $x \le y $ iff $x$ is on the path from $y$ to the root.  If $x < y$, we will say $y$ is upstream of $x$.  The terms area, volume, upstream length, link length, and energy are defined here in this context.

Note that each node in the spanning tree, except for the root, has exactly one link directed out of it.  Thus we can name each link according to its corresponding node; that is, when we speak of link $x$ we are referring to the link out of node $x$.  All the properties defined here for nodes will be extended to their corresponding links.  For example, the area of link $x$ is defined to be the area of node $x$.

The area of node $x$, $A_x$, is the total number of nodes whose paths to the outlet include $x$; that is, the number of nodes $y$ such that $x \le y$.  This is equivalent to the sum of the areas of each node directly linked to $x$ plus one for $x$ itself:

\begin{equation} \label{eq:area}
	A_x \:=\: \sum_{y} A_y \:+\: 1
\end{equation}
where y ranges over the nodes directly linked to x \cite{Rinaldo1992Minimum}.

The volume of node $x$, $C_x$, is the sum of all the areas of all nodes upstream of $x$ \cite{size_and_form}:
\begin{equation} \label{eq:volume}
	C_x \:=\: \sum_{y | x < y} A_y
\end{equation}

Equation \ref{eq:volume} can be transformed into a recursive definition for volume:

\begin{equation} \label{eq:recvol} 
	 C_x \:=\: \sum_{y} (C_y + A_y)
\end{equation}
where y ranges over the nodes directly linked to x. 

Upstream length is another quantity of interest; it is defined as the number of links in the path from a node $x$ to a source node, where at each step upstream the path taken goes to the node with the largest area \cite{Cieplak1998Models}.  Statistically, it is equivalent to define it as the distance (counted in number of links) to the farthest away upstream source \cite{Rinaldo2006Trees}.

Link length is a weight assigned to each link, generally representing the geometric length of the link in the embedding.  The length of link $i$ is denoted by $l_i$.

Energy is a function of the link lengths and areas of all the links.  The energy of a given tree configuration $s$ is given by:
\begin{equation} \label{eq:energy}
	H_{\gamma}(s) = \sum_{i \in links(s)} {A_i^{\gamma} l_i}
\end{equation}
with $0 \le \gamma \le 1$.  A tree network that achieves the minimum $H_{\gamma}(s)$ over all possible tree configurations on a given grid is an optimal channel network \cite{Rinaldo1992Minimum}.

Figure \ref{fig:ocnex} shows a grid and a possible tree configuration $s_1$.  $s_1$ minimizes $H_{1/2}(s)$ for all configurations on this grid.  Area, volume and upstream length values are also depicted for the nodes in $s_1$.

\begin{figure}[H]
	\centering
	\begin{subfigure}[c]{0.2\textwidth}
		\centering
		\includegraphics[scale = 0.5]{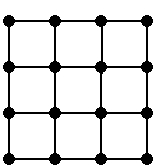}
		\caption{Underlying grid \\ { } }
		\label{fig:ocnexgrid}
	\end{subfigure} \hfill
	\begin{subfigure}[c]{0.2\textwidth}
		\centering
		\includegraphics[scale = 0.5]{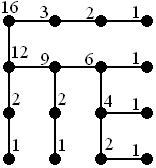}
		\caption{Areas of nodes \\ in $s_1$}
		\label{fig:ocnexarea}
	\end{subfigure} \hfill
	\begin{subfigure}[c]{0.2\textwidth}
		\centering
		\includegraphics[scale = 0.5]{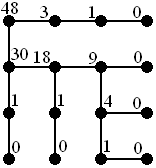}
		\caption{Volumes of nodes \\ in $s_1$}
		\label{fig:ocnexvolume}
	\end{subfigure} \hfill
	\begin{subfigure}[c]{0.2\textwidth}
		\centering
		\includegraphics[scale = 0.5]{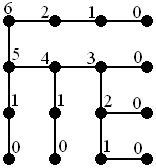}
		\caption{Upstream lengths \\ of links in $s_1$}
		\label{fig:ocnexupl}
	\end{subfigure} \hfill
	\caption{The outlets are in the top left corners.  Tree $s_1$ minimizes $H_{1/2}(s)$ for the grid in \ref{fig:ocnexgrid}. $H_{1/2}(s_1) = 24.3024964149$.}
	\label{fig:ocnex}
\end{figure}

Rinaldo et al. \cite{Rinaldo1992Minimum} derived a gamma value of 1/2 from estimations of physical properties of rivers, and hypothesized that natural rivers would minimize $H_{1/2}(s)$.  A remarkable result of \cite{Rinaldo1992Minimum} is that while the global optima of OCNs do not match well statistically with real river networks, the local optima are an excellent match.  The idea that nature tends towards a local optimum makes sense, so this suggests that OCNs are a good model for what actually happens in a river \cite{RodriguezIturbe1997Optimal}. 

When $\gamma = 1$ and all links are of unit length, the energy is equivalent to the volume.  When $\gamma = 0$, the energy is equivalent to the total length of all links, so minimizing the energy is equivalent to finding a minimum spanning tree (which, when all links are of unit length, would be \emph{any} spanning tree).


In most studies of OCNs, the network of potential links is a 2-dimensional lattice with additional links between diagonally-adjacent pairs of nodes (Fig. \ref{fig:2dgrid}).  For the sake of simplicity, all the links, diagonal and orthogonal, are here considered to have unit length, because it has been shown (\cite{RodriguezIturbe1997Optimal}) that the properties with which this paper is concerned are independent of whether the diagonal links are given realistic length or not.

\begin{figure}[H]
	\begin{subfigure}[b]{0.5\textwidth}
		\centering
		\includegraphics[scale = .5]{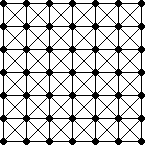}
		\caption{7x7 network of potential links}
		\label{fig:2dgrid}
	\end{subfigure}
	\begin{subfigure}[b]{0.5\textwidth}
		\centering
		\includegraphics[scale = .5]{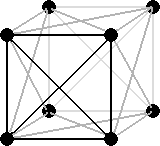}
		\caption{2x2x2 3d network of potential links}
		\label{fig:3dgrid}
	\end{subfigure}
	\caption{}
\end{figure}

In this study we also analyzed a three dimensional version of optimal channel networks, built on a 3-dimensional grid (Fig. \ref{fig:3dgrid}).  

To make an OCN, we first generated a random spanning tree on a given grid.  We used Prim's algorithm for minimum spanning trees for this.  Essentially, the algorithm builds the tree from a root node, maintaining a set of frontier edges from the grid that are incident on exactly one node in the tree.  Our algorithm randomly chooses one edge at a time from this set, adds it to the tree, and updates the set.  Rinaldo et al. also began with random spanning trees, though they did not specify their method for generating them \cite{Rinaldo1992Minimum}.

The random tree was then optimized using a version of Lin's algorithm for TSP, as in \cite{Rinaldo1992Minimum}.  That algorithm is as follows: \medskip

\begin{algorithm}[H]
\nonumber
\SetAlgoLined
\KwIn{Undirected graph $G$; random spanning tree $T$, directed so that there exists a path in $T$ from each node to a single root}
\KwOut{Locally optimal spanning tree on $G$}

\While{Convergence has not been reached}{
Step 1: Choose a random node $i$ in $T$

Step 2: Choose a random neighbor of $i$ in $G$ distinct from the node $i$

\Indp \Indp links to in $T$

\Indm \Indm Step 3: Construct $T^*$ from $T$ by redirecting $i$ to link to this new node

\lIf{$T^*$ contains a loop}{
	discard $T^*$
}

\lElseIf{$H_{\gamma}(T^*) < H_{\gamma}(T)$}{
	discard $T$ and let $T^*$ be the new $T$
}

\lElse{discard $T^*$}

}
\label{alg:mine}
\end{algorithm}

The condition for convergence was that the number of improvements made be only 1\% of the total number of iterations (or 2\% for larger networks).  See Figure \ref{fig:steps} for an illustration of the process.

\begin{figure}[H]
	\begin{subfigure}[b]{0.5\textwidth}
		\centering
		\includegraphics[scale = .6]{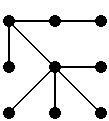}
		\caption{Initial tree configuration $T$.  The outlet is in the top left corner. $H_{\gamma}(T) = 9.65028$.}
	\end{subfigure} \quad
	\begin{subfigure}[b]{0.5\textwidth}
		\centering
		\includegraphics[scale = .6]{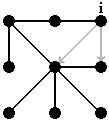}
		\caption{Step 1: Node $i$ has been chosen; the unused links to its neighbours in $G$ are highlighted.}
	\end{subfigure} \\
	\begin{subfigure}[b]{0.5\textwidth}
		\centering
		\includegraphics[scale = .6]{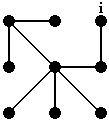}
		\caption{Step 3: A new link for $i$ in the tree has been chosen, creating $T^*$.  $H_{\gamma}(T^*) = 9.8637 > H_{\gamma}(T)$, so this tree will be discarded.}
	\end{subfigure} \quad
	\begin{subfigure}[b]{0.5\textwidth}
		\centering
		\includegraphics[scale = .6]{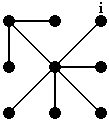}
		\caption{the next iteration, Step 3:  A different new link for $i$ in the tree has been chosen, creating $T^*$.  $H_{\gamma}(T^*) = 9.44949 < H_{\gamma}(T)$, so this tree will become the new $T$.}
	\end{subfigure}
	\caption{Optimization algorithm}
	\label{fig:steps}
\end{figure}

\begin{figure}[H]
	\begin{subfigure}[b]{0.5\textwidth}
		\centering
		\includegraphics[scale = .4]{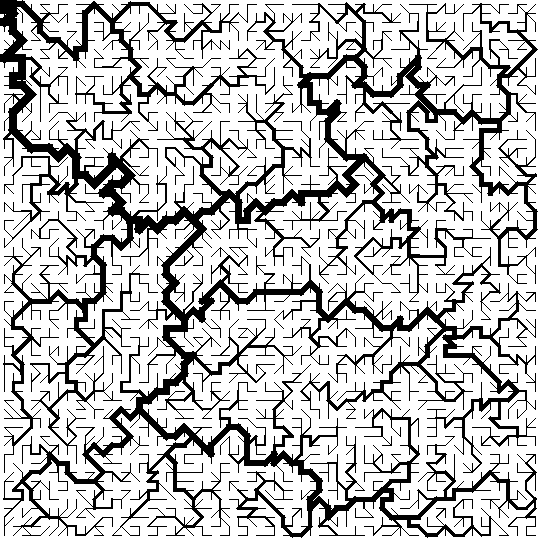}
		\caption{}
		\label{fig:ocnbefore}
	\end{subfigure}
	\begin{subfigure}[b]{0.5\textwidth}
		\centering
		\includegraphics[scale = .4]{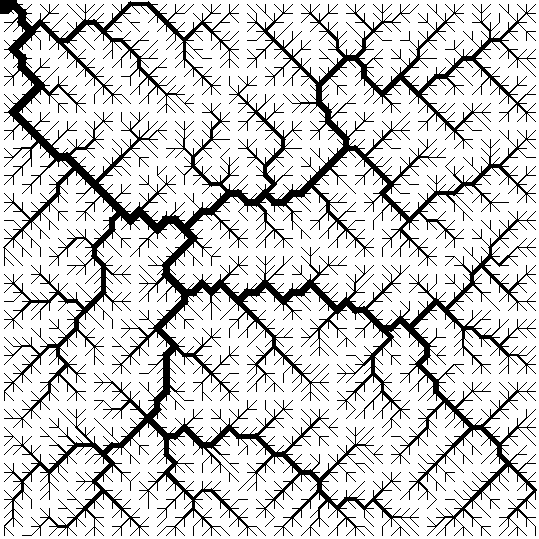}
		\caption{}
		\label{fig:ocnafter}
	\end{subfigure}
	\caption{A random tree spanning a 60x60 grid (a) and the resulting locally optimal OCN (b).  The outlets are in the top left corners.}
\end{figure}

\begin{figure}[H]
	\centering
	\includegraphics[scale = .5]{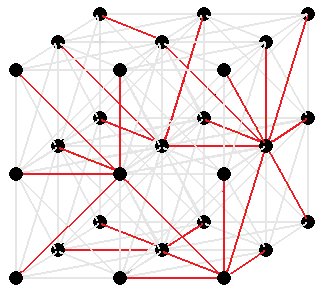}
	\caption{An OCN on a 3x3x3 grid; the red links are in the OCN.}
	\label{fig:3docn}
\end{figure}

Figure \ref{fig:ocnbefore} shows a random tree on a 60x60 grid and Figure \ref{fig:ocnafter} shows the result of using the above algorithm to optimize the tree.  In order to make the structure of the network clearer, links are drawn with a thickness proportional to the log of their area.  Figure \ref{fig:3docn} depicts a three-dimensional OCN on a 3x3x3 grid.

A range of grid sizes were analyzed.  Two-dimensional $n$x$n$ grids with $n =$ $10$, $20$, $30$, $40$, $50$, $60$, $70$, and $80$, and three-dimensional $n$x$n$x$n$ grids with $n = 8$, $10$, $12$, $14$, $16$, $18$, and $20$ were used.  For the smaller grids, data was averaged from at least $4$ realizations of each size, while for the larger grids at least $3$ realizations were used for each.  The quantities measured were distributions of area and length in whole basins, and average length, volume, and energy per area of subbasins and whole basins.

\section{Analytical Proofs} \label{sec:proofs}

\subsection{Energy bounds}

In \cite{Colaiori1997Analytical}, Colaiori et al. proved lower bounds for the energy of OCNs on an orthogonal grid (one with no diagonal links).  Here, we look at lower bounds for the energy of OCNs on an eight-neighbor grid.  The main result for this section (Theorem 3) is that for an $n \times n$ 8-way grid, $\frac{3}{2}n^2 - \frac{7}{2}n + 1$ is a lower bound on $H_{0.5}(s)$.

\begin{lem} \label{lem3} Let (P) be the optimization problem
	
	\[ \begin{array}{rl}
		\min        & \displaystyle\sum_{i = 1}^n A_i^{\gamma} \\
		\mbox{s.t.} & \displaystyle\sum_{i = 1}^n A_i = n + m \\
					& A_i \ge 1 \:\: \forall i \\
					& A_i \in \mathbb{Z} \:\: \forall i
		\end{array}
	\]
	
	where $n$ is the number of entries in the vector $A$, $m$ is some natural number, and $0 \le \gamma \le 1$. \\
	
	$A_1 = A_2 = ... = A_{n-1} = 1, A_n = m+1$ is an optimal solution to $(P)$.
\end{lem}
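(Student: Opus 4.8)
The plan is to prove this by a local exchange (``smoothing'') argument that repeatedly concentrates the surplus mass $m$ into a single coordinate. The crux is the scalar inequality: for all real $a,b \ge 1$ and $0 \le \gamma \le 1$,
\[ (a+b-1)^{\gamma} + 1 \;\le\; a^{\gamma} + b^{\gamma}. \]
I would prove this by writing $a = 1+s$, $b = 1+t$ with $s,t \ge 0$, so that it becomes $(1+s+t)^{\gamma} - (1+s)^{\gamma} \le (1+t)^{\gamma} - 1$. Since $f(x) = x^{\gamma}$ has derivative $f'(x) = \gamma x^{\gamma-1}$, which is nonincreasing on $[1,\infty)$ when $0 \le \gamma \le 1$, the increment of $f$ over the length-$t$ interval $[1+s,\,1+s+t]$ is at most its increment over the length-$t$ interval $[1,\,1+t]$, which lies further to the left. (When $\gamma = 0$ both sides are equal, so the inequality holds trivially there as well.)

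Next, given any feasible vector $A$, I would observe that if two of its coordinates, say $A_i$ and $A_j$, both exceed $1$, then replacing them by $1$ and $A_i + A_j - 1$ produces another feasible vector: the total sum is unchanged, every entry remains $\ge 1$, and integrality is preserved since $A_i + A_j - 1 \in \mathbb{Z}$. By the scalar inequality above (applied with $a = A_i$, $b = A_j$) this replacement does not increase $\sum_i A_i^{\gamma}$. Each such replacement strictly decreases the number of coordinates that are greater than $1$, so after finitely many steps we reach a feasible vector with at most one coordinate exceeding $1$; the sum constraint then forces that vector to be exactly $A_1 = \dots = A_{n-1} = 1$, $A_n = m+1$ up to relabeling. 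Since the objective never increased along this chain of replacements, the claimed vector attains the minimum, which is what we want.

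I expect the scalar inequality to be the only real obstacle; once it is in hand, the termination and feasibility bookkeeping for the exchange process is routine. As a fallback framing I would keep in mind the observation that $\sum_i A_i^{\gamma}$ is concave (a sum of concave functions) and that the continuous relaxation of $(P)$ has feasible region a simplex whose vertices are precisely the points with $n-1$ coordinates equal to $1$; a concave function on a polytope attains its minimum at a vertex, and here the vertices are integral, so the relaxation's optimum is already integral and coincides with the claimed solution. I anticipate presenting the exchange argument, however, since it sidesteps polyhedral theory and disposes of the integrality constraint for free.
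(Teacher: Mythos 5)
Your proposal is correct and follows essentially the same route as the paper: the same pairwise exchange replacing $(A_j, A_k)$ by $(1, A_j+A_k-1)$, justified by the same concavity inequality $(A_j+A_k-1)^{\gamma}+1 \le A_j^{\gamma}+A_k^{\gamma}$, iterated until only one non-unit entry remains. Your derivative-based derivation of the scalar inequality is just a more explicit version of the paper's appeal to concavity of $x^{\gamma}$.
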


\begin{proof} 
Suppose A is an optimal solution for $(P)$ and $\exists j$ s. t. $1 < A_j < m + 1$.

$\exists k \:\: s. t. \:\: 1 < A_k < m + 1, j \ne k$.

We can construct $A^*$ such that $A^*_i = A_i \:\: \forall i \ne j, k$; $A^*_j = 1$; $A^*_k = A_j + A_k - 1$.

Then $\sum_{i = 1}^n {A^*_i}^{\gamma} = (\sum_{i = 1}^n A_i^{\gamma}) - A_j^{\gamma} - A_k^{\gamma} + 1 + (A_j + A_k - 1)^{\gamma}$.

Since $f(x)= x^\gamma$ when $0 \le \gamma \le 1$ is concave, and $A_j, A_k > 1$, $(A_j + A_k - 1)^{\gamma} \le A_j^{\gamma} + A_k^{\gamma} - 1$.  So $\sum_{i = 1}^n {A^*_i}^{\gamma} \le \sum_{i = 1}^n A_i^{\gamma}$.  Thus $A^*$ must be an optimal solution for $(P)$.  By an iteration of this argument, there must be an optimal solution $A^+$ with all entries except for one equal to $1$.  Without loss of generality, the entries can be rearranged so that $A^+_n$ is the largest (only non-unit) entry.
\end{proof} \medskip

\begin{thm} For an $n \times n$ eight-neighbor grid $G$,  the optimal $H_1(s)$ is $\frac{4n^3 - 3n^2 - n}{6}$. \end{thm}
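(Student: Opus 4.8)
The plan is to reduce the statement about $H_1$ to a statement about shortest-path trees and then do a short count. For $\gamma = 1$ with unit link lengths, $H_1(s) = \sum_{i \in \mathrm{links}(s)} A_i = \sum_{x \ne \mathrm{root}} A_x$, and using the definition $A_x = |\{v : x \le v\}|$ and swapping the order of summation,
\[ \sum_{x \ne \mathrm{root}} A_x \;=\; \sum_{v} |\{x \ne \mathrm{root} : x \le v\}| \;=\; \sum_v d_s(v), \]
where $d_s(v)$ is the number of links on the path from $v$ to the outlet in $s$ (its depth); informally, each link $i$ contributes $A_i$ because exactly $A_i$ nodes drain through it. So minimizing $H_1$ over spanning trees of $G$ is the same as minimizing the total outlet-path length of the tree.

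Next I would fix the outlet at a corner $r$ of the grid (as in every figure of the paper, where the outlet is a distinguished point) and bound each term: in any spanning tree of $G$ rooted at $r$ one has $d_s(v) \ge d_G(r, v)$, the length of a shortest $r$--$v$ path in $G$, so $H_1(s) \ge \sum_v d_G(r, v)$. This bound is attained by a breadth-first (shortest-path) tree rooted at $r$, and such a tree genuinely exists inside $G$ because a shortest king-move path between two points of the $n \times n$ box stays inside the box; in fact linking $(i,j)$ to $\bigl(\max(i-1,1),\max(j-1,1)\bigr)$ is an explicit minimizing tree when $r = (1,1)$. Hence $\min_s H_1(s) = \sum_v d_G(r, v)$.

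It then remains to evaluate this sum. In the eight-neighbor grid a single step changes each coordinate by at most $1$ and may change both at once, so $d_G\bigl((i,j),(i',j')\bigr) = \max(|i-i'|,|j-j'|)$, the Chebyshev distance. With $r = (1,1)$, the number of nodes at Chebyshev distance exactly $k$ is $|\{(i,j) \in \{1,\dots,n\}^2 : \max(i,j) = k+1\}| = (k+1)^2 - k^2 = 2k+1$ for $k = 0, 1, \dots, n-1$. Therefore
\[ \min_s H_1(s) \;=\; \sum_{k=0}^{n-1} k(2k+1) \;=\; 2\!\sum_{k=0}^{n-1} k^2 + \sum_{k=0}^{n-1} k \;=\; \frac{n(n-1)(4n+1)}{6} \;=\; \frac{4n^3 - 3n^2 - n}{6}, \]
using the standard power-sum formulas, which is the claimed value.

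The main obstacle is the reduction rather than the arithmetic: one must argue carefully both that the minimum over trees with a fixed outlet is \emph{exactly} $\sum_v d_G(r,v)$ — which needs an honest shortest-path spanning tree realized inside the finite grid, not merely the inequality — and that the outlet sits at a corner, consistent with the OCN setup used throughout. Once those structural points are settled, counting nodes by Chebyshev shell and summing finishes the proof.
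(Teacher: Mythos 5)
Your proof is correct and is essentially the paper's argument viewed from the dual side: the paper groups the double count by concentric ``stripes'' (which are exactly your Chebyshev shells of sizes $2k-1$) and lower-bounds the area forced through each one, while you group by node via the identity $H_1(s)=\sum_v d_s(v)$ and lower-bound each depth by the Chebyshev distance; both establish achievability with the same inward-pointing shortest-path tree. The arithmetic agrees, so no changes are needed.
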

\begin{proof}

\begin{figure}[h] 
	\centering
	\includegraphics[scale = .5]{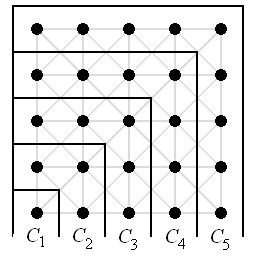}
	\caption{The five ``stripes'' of a $5 \times 5$ 8-way grid with the outlet in the bottom left corner.}
	\label{fig:stripes}
\end{figure}

Consider the subsets of vertices $C_k$ where vertex $i \in C_k$ iff the distance from $i$ to the outlet equals $k-1$ (Figure \ref{fig:stripes}).  In an $n \times n$ 8-way grid there are n such stripes.  Assuming the outlet is located in a corner, $|C_k| = 2k - 1$.
Clearly,

\begin{equation} \label{eq:doublesum}
H_\gamma(s) = \sum_{i \in links(s)} A_i^\gamma = \sum_{k=2}^n \sum_{i \in C_k} A_i^\gamma
\end{equation}

Recall that the area of node $i$ is the number of nodes upstream of $i$, plus $1$ for $i$ itself.  So for a given $k$, $\sum_{i \in C_k} A_i^\gamma$ will have to include the number of nodes in $C_k$ and the total number of nodes upstream of all $i \in C_k$, which must include all the nodes $j \in \bigcup_{l > k} C_l$, since their paths to the outlet must pass through stripe $C_k$ at some point.  $\bigcup_{l > k} C_l$ is simply the set of all the nodes in the grid minus those within the $k$x$k$ section enclosed by $C_k$, which is $n^2 - k^2$ nodes.  Thus the total amount of area being passed through $C_k$ is at least
\begin{equation} \label{qu:stripearea}
|C_k| + n^2 - k^2
\end{equation}

Then we have

\[H_1(s) \: =\: \sum_{k=2}^n \sum_{i \in C_k} A_i \:\ge\: \sum_{k=2}^n ( 2k - 1 +  n^2 - k^2 ) \]

The rightmost sum simplifies to $\frac{1}{6}(4n^3 - 3n^2 - n)$; therefore this is a lower bound on $H_1(s)$.

Consider the network $t$ where every link from node $i$ to node $j$ is such that if $i \in C_k$ then $j \in C_{k-1}$.  Here, quantity \ref{qu:stripearea} is exactly the area passing through stripe $C_k$, so $H_1(t) = \frac{1}{6}(4n^3 - 3n^2 - n)$.  Hence the lower bound is achievable.

\end{proof} \medskip

\begin{thm} \label{thm:main} For an $n \times n$ eight-neighbor grid, $\frac{3}{2}n^2 - \frac{7}{2}n + 1$ is a lower bound on $H_{0.5}(s)$. \end{thm}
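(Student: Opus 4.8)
The plan is to re-use the ``stripe'' decomposition from the proof of the preceding ($H_1$) theorem and then apply Lemma~\ref{lem3} stripe by stripe. Recall that for the $n \times n$ 8-way grid with outlet in a corner, $|C_k| = 2k-1$, that $H_{0.5}(s) = \sum_{k=2}^{n}\sum_{i\in C_k} A_i^{1/2}$ by \ref{eq:doublesum}, and that the total area routed through stripe $C_k$ satisfies $\sum_{i\in C_k} A_i \ge |C_k| + n^2 - k^2 = 2k-1 + n^2 - k^2$ (quantity~\ref{qu:stripearea}). These are the only facts I intend to borrow from Section~\ref{sec:proofs}.

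First I would fix $k$ and bound $\sum_{i\in C_k} A_i^{1/2}$ from below. For $i \in C_k$ the areas $A_i$ form a vector of $2k-1$ positive integers whose sum is at least $(2k-1) + (n^2-k^2)$; since $x\mapsto x^{1/2}$ is increasing, the minimum of the sum of square roots over all such vectors is attained when this sum equals the value exactly (any surplus can be shaved off one entry at a decrease in the objective), so Lemma~\ref{lem3} applies with $\gamma = 1/2$, ``$n$''$\,=2k-1$, ``$m$''$\,=n^2-k^2$, giving
\[
 \sum_{i\in C_k} A_i^{1/2} \;\ge\; (2k-2) + \sqrt{\,n^2 - k^2 + 1\,}.
\]
Summing over $k=2,\dots,n$ and using $\sum_{k=2}^{n}(2k-2) = n(n-1) = n^2 - n$ yields
\[
 H_{0.5}(s) \;\ge\; n^2 - n + \sum_{k=2}^{n}\sqrt{\,n^2 - k^2 + 1\,}.
\]

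The remaining task is to bound $\sum_{k=2}^{n}\sqrt{n^2-k^2+1}$ from below, and this is the only place needing a small idea: the sum has no tidy closed form (it behaves like $\tfrac{\pi}{4}n^2$, comfortably above what is needed). I would replace each term by a crude linear bound: for $2 \le k \le n$ one has $\sqrt{n^2-k^2+1} \ge n-k$, since squaring reduces this to $2k(k-n) \le 1$, which holds as $k \le n$. Hence $\sum_{k=2}^{n}\sqrt{n^2-k^2+1} \ge \sum_{k=2}^{n}(n-k) = \tfrac12 (n-1)(n-2)$, and therefore
\[
 H_{0.5}(s) \;\ge\; n^2 - n + \tfrac12 (n-1)(n-2) \;=\; \tfrac32 n^2 - \tfrac52 n + 1 \;\ge\; \tfrac32 n^2 - \tfrac72 n + 1,
\]
as claimed; the weaker $-\tfrac72 n$ term leaves room for any extra slack (e.g.\ from dropping the ``$+1$'' under the radical, or from a coarser estimate of the square-root sum).

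The main obstacle is genuinely just that square-root sum in the last paragraph; everything else is the same stripe-counting argument used for $H_1$, with Lemma~\ref{lem3} supplying the convexity input that routing all the surplus area of a stripe through one link is the cheapest way to meet the constraint. It is worth sanity-checking the degenerate stripe $k=n$ (there $n^2-k^2=0$, the bound reads $\sum_{i\in C_n}A_i^{1/2}\ge 2n-1$, which is immediate) and confirming the final inequality on the smallest grids, but no new ideas are required there.
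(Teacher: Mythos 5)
Your proof is correct and follows essentially the same route as the paper's: the stripe decomposition, the per-stripe area bound $|C_k| + n^2 - k^2$, and Lemma~\ref{lem3} to reduce stripe $k$ to $2k-2 + \sqrt{n^2-k^2+1}$; your direct estimate $\sqrt{n^2-k^2+1}\ge n-k$ is exactly what the paper's concavity step $(n^2-k^2+1)^\gamma \ge n^{2\gamma}-(k^2)^\gamma$ gives at $\gamma=1/2$. Your arithmetic in fact yields the slightly sharper $\tfrac{3}{2}n^2-\tfrac{5}{2}n+1$ (the paper's $-\tfrac{7}{2}n$ coefficient traces to small slips in evaluating $\sum_{k=2}^{n}(2k-2)$ and $\sum_{k=2}^{n}k$), so weakening to the stated bound at the end is the right move.
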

\begin{proof}

Following the same reasoning as above, we have that

\begin{equation}
H_\gamma(s) = \sum_{k=2}^n \sum_{i \in C_k} A_i^\gamma
\end{equation}

and the total amount of area being passed through $C_k$ is at least $|C_k| + n^2 - k^2$.

By Lemma \ref{lem3}, the optimal way to distribute this area over the vertices in $C_k$ is to send all the upstream area, $n^2-k^2$, through one vertex in $C_k$, leaving each of the other vertices in $C_k$ with area 1.  (Note that this is not usually feasible, but it is a lower bound.) \\*
Thus for a given $k$,
\[ \sum_{i \in C_k} A_i^\gamma \ge |C_k| - 1 + (n^2 - k^2 + 1)^\gamma \]
Since $f(x) = x^\gamma$ is concave,
\[\sum_{k=2}^n (n^2 - k^2 + 1)^\gamma \:\ge\: \sum_{k=2}^n [n^{2\gamma} - (k^2 - 1)^\gamma] \:\ge\: n^{2\gamma + 1} - n^{2\gamma} - \sum_{k=2}^n (k^2)^\gamma  \]
So
\[ \sum_{k=2}^n \sum_{i \in C_k} A_i^\gamma
 \:\ge\: \sum_{k=2}^n [2k-2 + (n^2 - k^2 + 1)^\gamma]
  \:\ge\: n^2 - 3n + n^{2\gamma +1} - n^{2\gamma} - \sum_{k=2}^n k^{2\gamma}  \]
For $\gamma = 0.5$, this evaluates to $2n^2 - 4n - (\frac{n^2-n}{2} - 1) \:=\: \frac{3}{2}n^2 - \frac{7}{2}n + 1$.
\end{proof}


\medskip

\subsection{Volume Scaling}

In \cite{Maritan2002}, Maritan et al. make an analytical prediction for the scaling of volume with area in rivers.  Their analysis depends on the relationship $ \langle L_x \rangle \propto A_x^h$, where $ \langle L_x \rangle $ is the mean distance from vertices upstream of $x$ to $x$, and $h$ is Hack's exponent, from Hack's law (a power-law relationship between basin length and area) \cite{Hack57}.  This is combined with the equation $V_x = A_x \langle L_x \rangle $ to arrive at the relationship $V_x \propto A_x^{1+h}$.  This relationship is verified by their data on real river networks as well as OCNs, where (for both of which) $h$ is typically close to $0.57$ \cite{Maritan2002}.

If $h$ were to equal exactly $0.5$, this would imply isometric scaling of length and area in river basins, as $A_x$ would scale directly with $l_x^2$, and shape would be preserved.  This would line up with the prediction in \cite{size_and_form} that $V \propto A_x^{{D+1}/D}$ in the most efficient networks for $D = 2$.  However, since river basins tend to elongate with growth, getting proportionally narrower as they get larger, $h$ is usually slightly greater than $0.5$, and the lower bound predicted in \cite{size_and_form} is rarely reached.  

Shifting focus to three dimensions, we can look for evidence of a corresponding Hack's law for three dimensional OCNs (that is, evidence that area scales with length to some constant power).  If there is a relatively constant scaling exponent $h$, the analysis in \cite{Maritan2002} can be extended to three dimensions, since the steps taken there do not otherwise depend on dimension.  Thus, we might expect to see $V_x \propto A_x^{1+h}$ for some $h$; according to the analysis in \cite{size_and_form},  $V_x$ should scale at least as $A_x^{4/3}$, so we might expect $1/3$ to be a lower bound for $h$, with $h$ slightly greater than $1/3$ if three dimensional OCNs elongate in a similar way to two dimensional ones.  These predictions are borne out in the simulation results that follow.

\section{Experimental Results}

\subsection{Energy Scaling}

\begin{figure}[H]
	\centering
	\includegraphics[scale = .6]{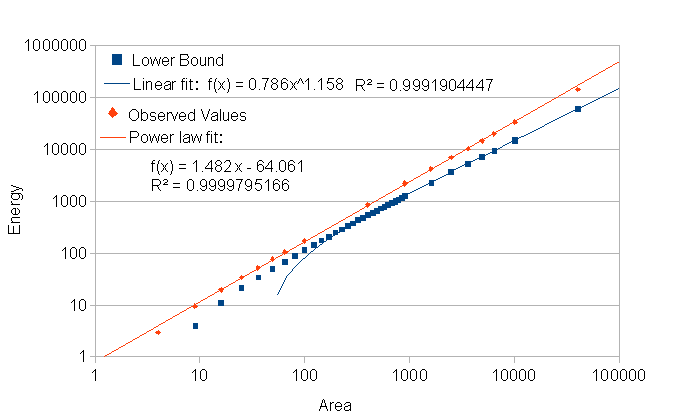}
	\caption{Log-log plot of area vs. energy for 2-dimensional OCNs, showing both analytical lower bound (lower, blue) and observed values (upper, red).}
	\label{fig:energies}
\end{figure}

Fig. \ref{fig:energies} shows how empirically observed minimum possible energy scales with area in two-dimensional OCNs.  The estimated line of best fit is shown, as well as the analytical lower bound derived in Section \ref{sec:proofs}.  The observed values of energy follow a power law with a small exponent, while the lower bound is essentially linear with respect to area.

\subsection{Length Scaling}

Scaling exponents $h$ where $l \propto A^h$ in two and three dimensions is shown in Table \ref{tab:length}.  The observed values of the exponents for two dimensions are within the bounds found in other studies \cite{Hack57, Maritan2002}.
For three dimensions, the exponent is also fairly consistent, though less so than in two dimensions.  It also deviates slightly more from the isometric value of 1/3 than the two dimensional version does from 1/2.  The fact that it is higher than 1/3 fulfils the hypothesis that three-dimensional basins elongate like two-dimensional ones.

\begin{table}[H]
	\centering
	\begin{tabular}{| l c |}
		\hline
		 basins analyzed &	$h$ \\ \hline
		2d whole basins &	0.6066895125 \\
		2d all subbasins &	0.5808520176 \\
		3d whole basins &	0.3645961454 \\
		3d all subbasins &	0.4230582779 \\
		\hline
	\end{tabular}
	\caption{Length scaling exponents}
	\label{tab:length}
\end{table}

\subsection{Volume Scaling}

Fig. \ref{fig:2dvol} shows the scaling of volume with area in two-dimensional OCNs, for whole networks and for all subbasins.    Note that the exponents are very similar, indicating that the scaling behavior is the same within basins as across different sizes of basins.  The exponents are both close to 1.57, as in \cite{Maritan2002}.

\begin{figure}[H]
	\begin{subfigure}[b]{0.5\textwidth}
		\centering
		\includegraphics[scale = .45]{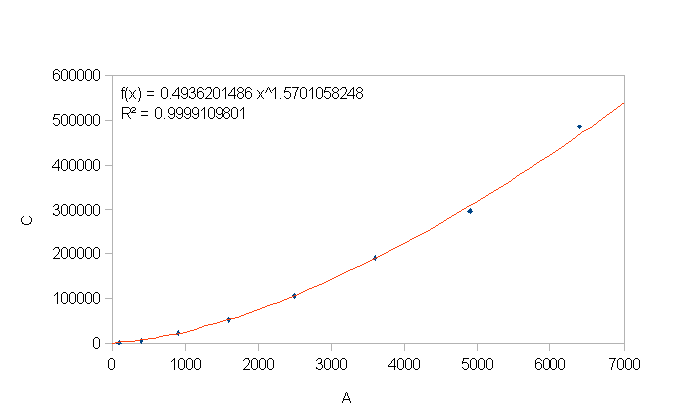}
		\caption{}
		\label{fig:wholes2}
	\end{subfigure}
	\begin{subfigure}[b]{0.5\textwidth}
		\centering
		\includegraphics[scale = .262]{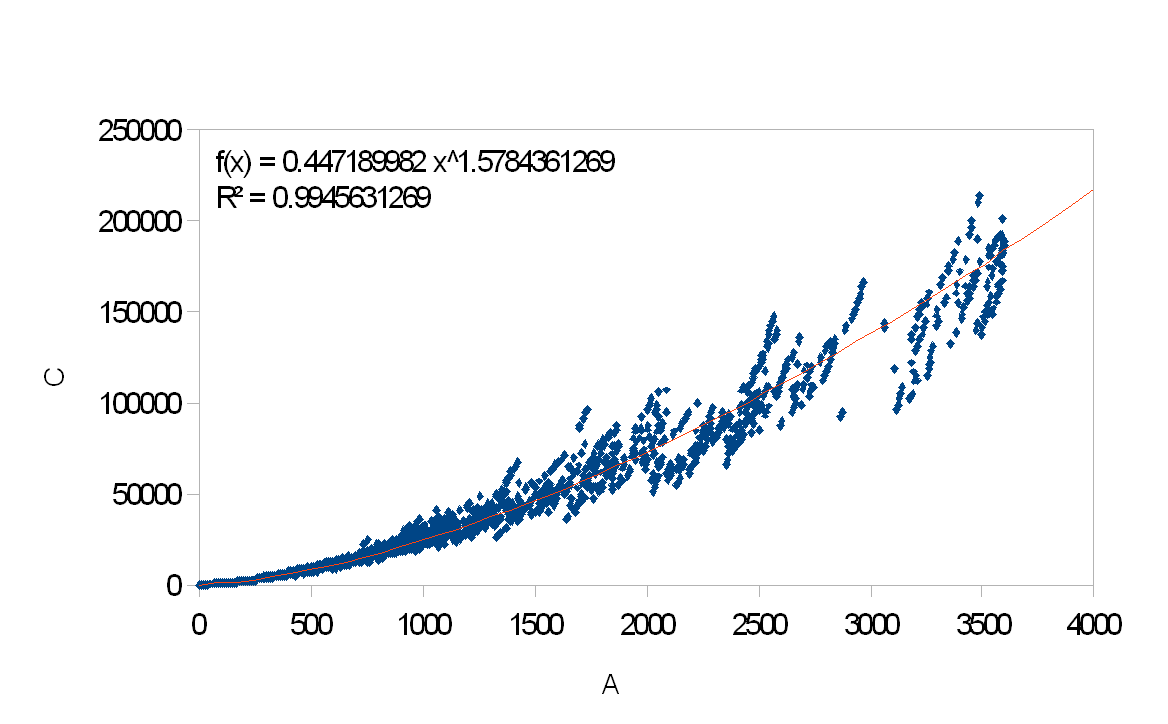}
		\caption{}
		\label{fig:2}
	\end{subfigure}
	\caption{Volume plotted with area for whole nxn basins (\ref{fig:wholes2}) and all subbasins (\ref{fig:2}) of 2-dimensional OCNs.}
	\label{fig:2dvol}
\end{figure}

\begin{figure}[H]
	\begin{subfigure}[b]{0.5\textwidth}
		\centering
		\includegraphics[scale = .45]{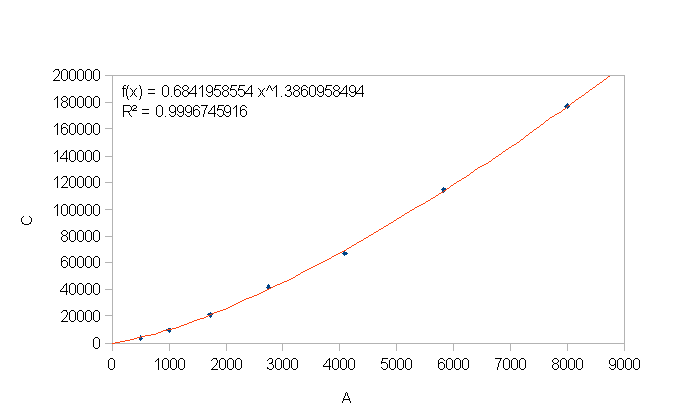}
		\caption{Whole nxn basins}
		\label{fig:wholes}
	\end{subfigure} 
	\begin{subfigure}[b]{0.5\textwidth}
		\centering
		\includegraphics[scale = .45]{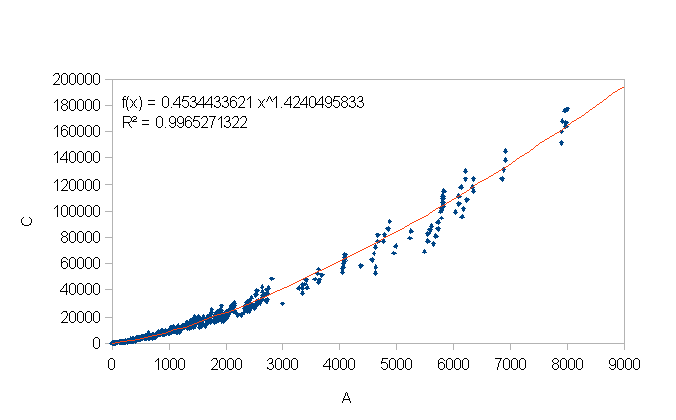}
		\caption{all subbasins}
		\label{fig:3dsubs}
	\end{subfigure} 
	\begin{subfigure}[b]{0.5\textwidth}
		\centering
		\includegraphics[scale = .45]{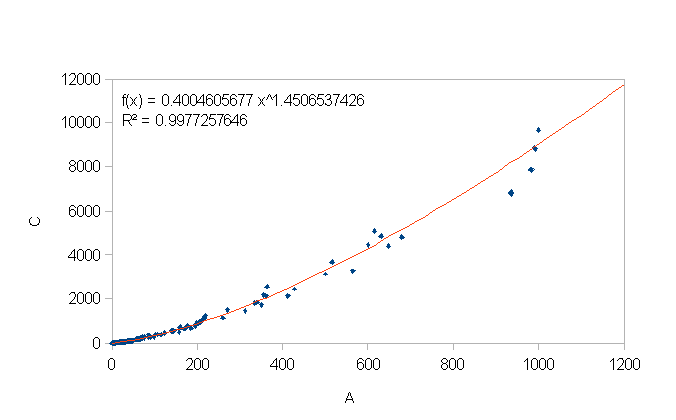}
		\caption{subbasins of 10x10x10 OCNs}
		\label{fig:3dsubs10}
	\end{subfigure} \quad
	\begin{subfigure}[b]{0.5\textwidth}
		\centering
		\includegraphics[scale = .45]{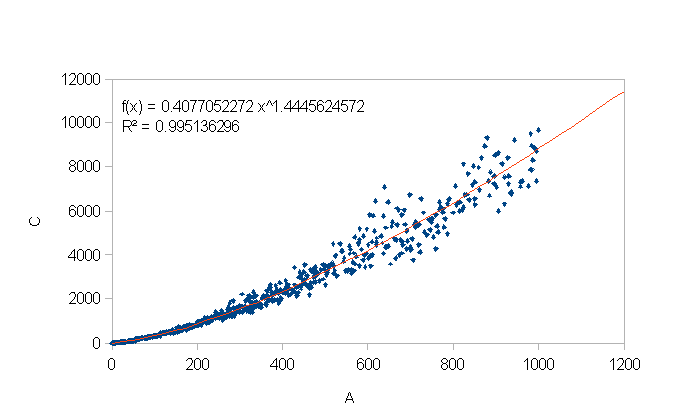}
		\caption{all subbasins with Area $\le$ 1000}
		\label{fig:3dsubssmall}
	\end{subfigure}
	\caption{Volume plotted with area for 3-dimensional OCNs.}
	\label{fig:3dvol}
\end{figure}

\begin{table}[H]
	\centering
	\begin{tabular}{| l c |}
		\hline
		basins analyzed & $\alpha$ \\ \hline
		whole networks &	1.386 \\
		all subbasins &	1.424 \\
		all subbasins of 20x20x20 network &	1.437 \\
		subbasins with area $>$ 1000 &	1.3776 \\
		subbasins of 20x20x20 network with area $>$ 1000 &	1.342 \\
		all subbasins with area $\le$ 1000 &	1.44456 \\
		subbasins of 20x20x20 network with area $\le$ 100 &	1.457 \\
		subbasins of 10x10x10 network &	1.4506 \\
		\hline
	\end{tabular}
	\caption{Volume scaling exponents for three-dimensional OCNs}
	\label{tab:3dvol}
\end{table}

The three-dimensional results for volume scaling are displayed in Fig. \ref{fig:3dvol} and summarized in Table \ref{tab:3dvol}.    The differences in exponents between the different data sets show higher scaling behavior in the sets of smaller basins; the difference between the exponent for whole basins (\ref{fig:wholes}) and all subbasins (\ref{fig:3dsubs}) is due to the greater concentration of smaller basins in the set of all subbasins, skewing the slope.  Note that the exponents in \ref{fig:3dsubs10} and \ref{fig:3dsubssmall} are similar, showing that scaling behavior in the subbasins is the same for the same distributions of area independent of the size of the encompassing whole basin.  In Section \ref{sec:proofs} we predicted that $\alpha$ would equal $1 + h$, using the analysis in \cite{Maritan2002}, and this relationship is clear in Tables \ref{tab:length} and \ref{tab:3dvol}.

The exponent $\alpha$ ranges from $\approx$1.34 to $\approx$1.46, with a higher exponent for collections of smaller basins.  These results are not inconsistent with the data for metabolic allometry in biological organisms, where a power-law fit to data from a wide range of sizes approximates that metabolic rate scales with mass to the power of 0.70 ($1/\alpha$) \cite{Savage2008Sizing}.  Not only is this value within the range found in the OCNs, but in organisms as well it is found that sets of smaller organisms yield a higher exponent than sets of larger ones \cite{Kolokotrones2010Curvature, Savage2004Predominance}.

\section{Alternative Models}

\subsection{Steiner Tree Model}

In addition to studying OCNs, we took a preliminary look at comparing OCNs with Steiner trees.

Steiner trees are minimum-weight trees that connect a given subset of nodes in a graph.  The specified nodes are called terminals.  Steiner trees may include non-terminal nodes; these are called Steiner points (or Steiner nodes).

In \cite{Chung1989Steiner}, Steiner trees built on a grid are considered.  The underlying graph is all points in a Euclidean plane, completely connected, and the terminals are the points of a grid.  We began comparing Steiner trees with OCNs by looking at the relative optimality of OCNs minimizing $H_{0.5}(s)$ and normal Steiner trees on grids of the same sizes.

In order to evaluate the energy of a Steiner tree, the tree needs to be directed; this can be done by choosing the outlet to be an arbitrary corner node and directing all the edges towards the outlet.

For the purposes of this study area was defined as the number of terminal nodes in the subtree rooted at a given node.  This way, the terminal nodes have exactly one unit of area associated with each of them, just as the grid nodes do in the OCN.

The OCNs here have realistic link lengths, where diagonal links are length $\sqrt{2}$, to make them comparable to the Steiner trees.

OCNs $s$ of size 2x2, 3x3, 4x4, and 5x5 were constructed and $H_{\gamma}(s)$ for each were compared with $H_{\gamma}(t)$ for Steiner trees $t$ of the same size.  The Steiner trees used were found in \cite{Chung1989Steiner}.  Figure \ref{fig:steinerpics} depicts the pairs of trees that were compared.  On the left of each pair is the Steiner tree, with open circles for the Steiner nodes, and on the right of each is the corresponding OCN.

\begin{figure}[H]
	\begin{subfigure}[b]{0.17\textwidth}
		\centering
		\includegraphics[scale = .5]{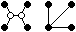}
		\caption{2x2}
		\label{fig:2x2pics}
	\end{subfigure}
	\begin{subfigure}[b]{0.21\textwidth}
		\centering
		\includegraphics[scale = .5]{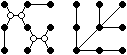}
		\caption{3x3}
		\label{fig:3x3pics}
	\end{subfigure}
	\begin{subfigure}[b]{0.27\textwidth}
		\centering
		\includegraphics[scale = .5]{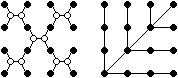}
		\caption{4x4}
		\label{fig:4x4pics}
	\end{subfigure}
	\begin{subfigure}[b]{0.33\textwidth}
		\centering
		\includegraphics[scale = .5]{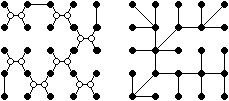}
		\caption{5x5}
		\label{fig:5x5pics}
	\end{subfigure}
	\caption{Steiner trees and OCNs for different nxn grids.}
	\label{fig:steinerpics}
\end{figure}

Since Steiner trees minimize total link length, it was clear that they would always have lowest energy for $\gamma = 0$.  For larger values of gamma, however, where area becomes a bigger factor than link length, we hypothesized that the OCNs would have a lower energy value than the Steiner trees.  Finding whether this was true, and where the crossover point would be, was the goal of the comparisons.

\subsection{Analytic Results}

\begin{figure}[H]
	\begin{subfigure}[b]{0.49\textwidth}
		\centering
		\includegraphics[scale = .75]{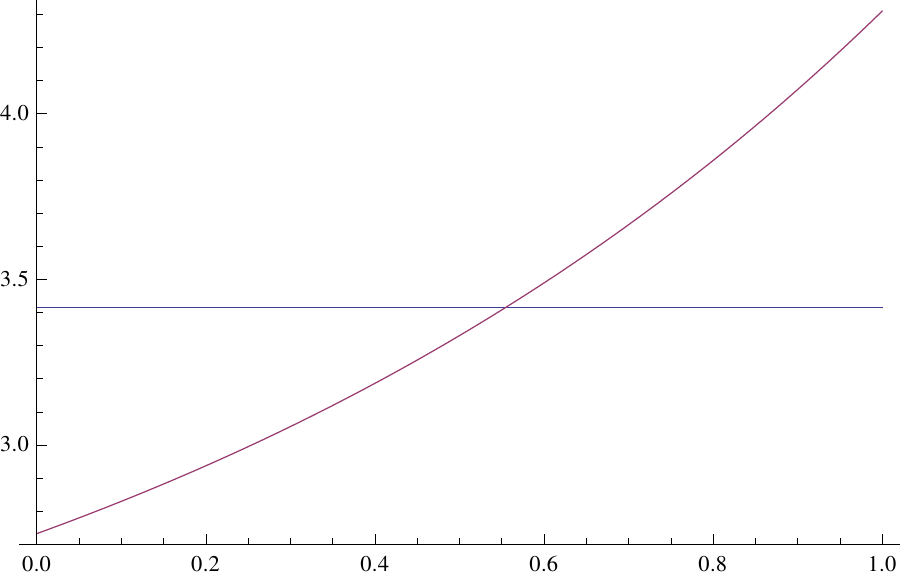}
		\caption{2x2}
		\label{fig:2x2graph}
	\end{subfigure}\quad
	\begin{subfigure}[b]{0.49\textwidth}
		\centering
		\includegraphics[scale = .75]{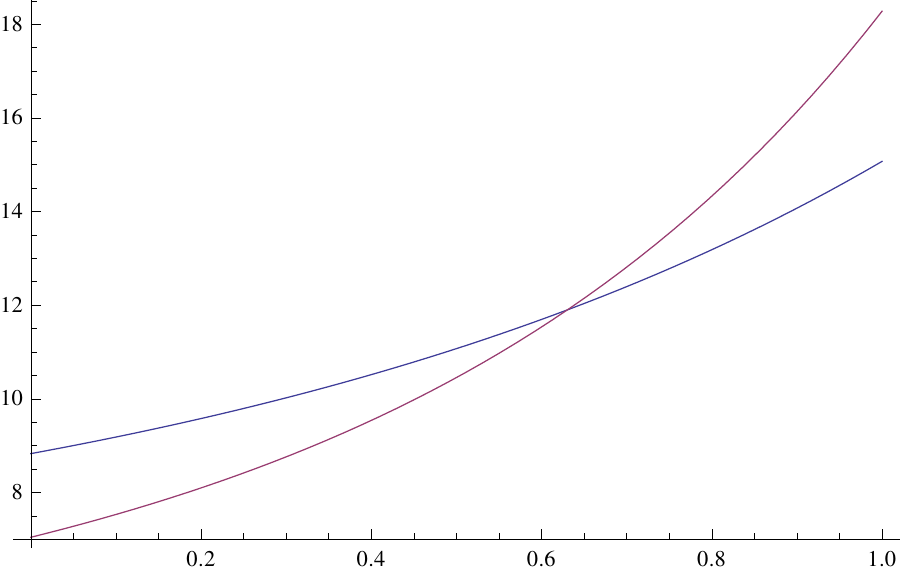}
		\caption{3x3}
		\label{fig:3x3graph}
	\end{subfigure}\\
	\begin{subfigure}[b]{0.49\textwidth}
		\centering
		\includegraphics[scale = .75]{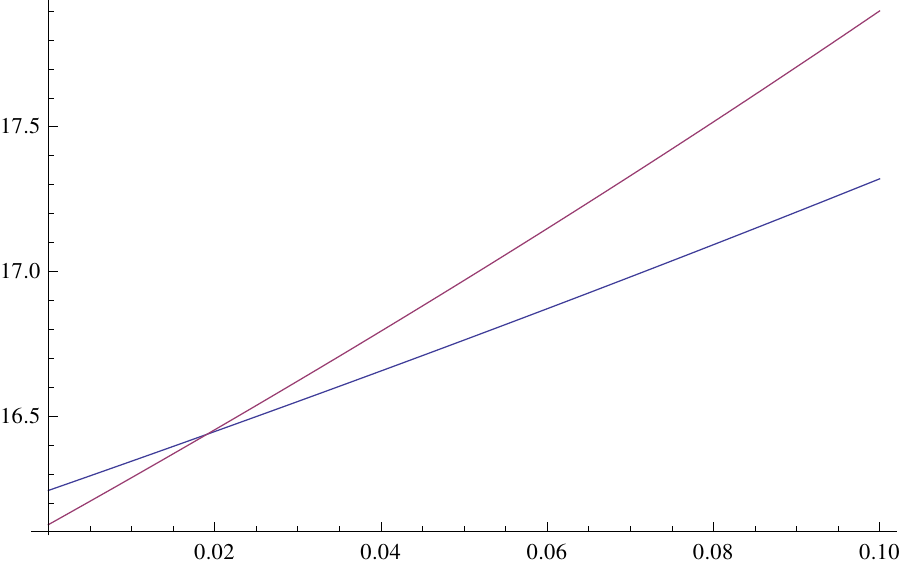}
		\caption{4x4}
		\label{fig:4x4graph}
	\end{subfigure}\quad
	\begin{subfigure}[b]{0.49\textwidth}
		\centering
		\includegraphics[scale = .75]{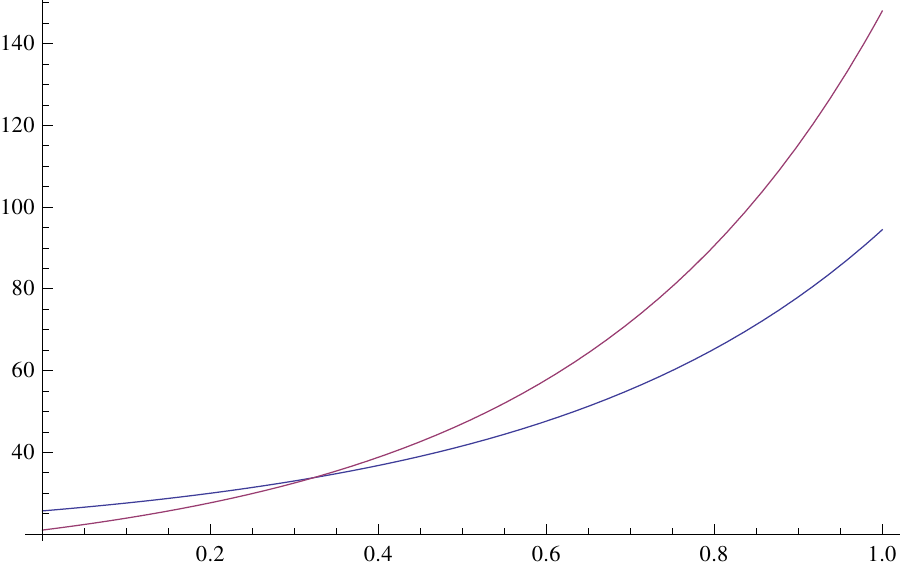}
		\caption{5x5}
		\label{fig:5x5graph}
	\end{subfigure}
	\caption{Plots of $H_{\gamma}(s)$ (blue) and $H_{\gamma}(t)$ (red), where $s$ is an OCN and $t$ a Steiner tree of the given size.  In each plot, the horizontal axis is $\gamma$ and the vertical axis is energy.}
	\label{fig:steinergraphs}
\end{figure}

Figure \ref{fig:steinergraphs} shows plots of the energy for corresponding Steiner trees and OCNs, $0 \le \gamma \le 1$.  As expected, the Steiner trees have lower energy than the OCNs for smaller $\gamma$, while for $\gamma$ past a certain crossover point the OCNs are better.  Table 3 
lists the crossover values of $\gamma$ for each size.

\begin{table}[H]
	\centering
	\begin{tabular}{| l l |}
		\hline
		size & crossover $\gamma$\\ \hline
		2x2 & 0.55433153188825812933\\
		3x3 & 0.63036154802337965422\\
		4x4 & 0.01907039679465522977\\
		5x5 & 0.32264708810531986705\\
		\hline
	\end{tabular}
	\caption{Crossover values of $\gamma$}
	\label{tab:crossovers}
\end{table}
\medskip

\subsection{Interpretation}

From the sizes studied, it is not possible to detect a trend in crossover points.

The next step would be to combine the two models, constructing Steiner-type trees that minimize $H_{\gamma}(s)$ for varying $\gamma$ and comparing them with real river networks and with normal OCNs.

\section{Conclusion}

We have looked at three dimensional OCNs as they compare with metabolite distribution networks in organisms.
In the organism context, volume is interpreted as volume of blood (which is directly proportional to the mass or volume of an organism) and area is proportional to the number of capillaries \cite{Savage2008Sizing}.  In this context, the grid used to create OCNs no longer represents fixed space (as it does in the two-dimensional/river context) and the amount of physical area directly associated with each node may not be the same over different body masses.  This would only have a linear effect on the relationship between area and mass, however, so this has no effect on the scaling exponent.

Though the three-dimensional networks do not actually look like the metabolic networks in organisms, we have shown that they have similar area/volume scaling behavior.  This adds credence to the idea that the observed metabolic scaling is a result of characteristics of the distribution network.

Another way to study the possible similarities between OCNs and metabolic networks would be to derive an energy functional for three dimensional OCNs from metabolic networks themselves, rather than using the one derived from rivers.  This could also include consideration of heterogeneity.  Heterogeneity in the river context is addressed in \cite{RodriguezIturbe1997Optimal} and \cite{Colaiori1997Analytical}; possibly this work could be extended to three dimensions.

\section{Acknowledgments}

The first author was supported by a Graduate Fellowship from Rensselaer Polytechnic Institute during the completion of this work.

\bibliographystyle{plain}
\bibliography{fullBib}

\endgroup
\end{document}